\newtheorem{theorem}{Theorem}
\newtheorem{lemma}[theorem]{Lemma}
\newtheorem{corollary}[theorem]{Corollary}
\theoremstyle{definition}
\theoremstyle{remark}
\newcommand{\E}{\mathbb{E}}
\newcommand{\R}{\mathbb{R}}
\begin{document}

\textbf{Probability Theory/ \textit{Probabilit\'{e}s}}
 
\title[The Baum--Katz theorem
for sequences of pairwise independent random variables]{On the Baum--Katz theorem
	for sequences of pairwise independent random variables with regularly varying normalizing constants}


\author{L\^{e} V\v{a}n {Th\`{a}nh}}
\address{Department of Mathematics, Vinh University, 182 Le Duan, Vinh, Nghe An, Vietnam}
\email{levt@vinhuni.edu.vn}

\subjclass[2010]{Primary 60F15}
\keywords{Complete convergence, Baum--Katz theorem,
	Marcinkiewicz--Zygmund strong law of large numbers,
	Pairwise independence, Slowly varying function.}
\date{}

\begin{abstract}
This paper
proves the Baum--Katz theorem 
for sequences of pairwise independent identically distributed random variables
with general norming constants
under optimal moment conditions. 
The proof exploits some properties of slowly
varying functions and the de Bruijn conjugates, and uses the techniques developed by Rio (1995)
to avoid using the maximal type inequalities.
\end{abstract}

\maketitle

\section{Introduction and result}

Let $1\le p<2,\ \alpha p\ge 1$ and $\{X,X_n,n\ge1\}$ be a sequence of pairwise independent identically distributed (p.i.i.d.)
random variables. In this paper, by
using some results related to slowly varying functions and techniques developed by Rio \cite{rio1995vitesses},
we provide the necessary and sufficient conditions for
\begin{equation}\label{BK01}
\sum_{n}
n^{\alpha p-2} \mathbb{P}\left(\max_{1\le k\le n}\left|
\sum_{i=1}^{k}X_i\right|>\varepsilon n^{1/\alpha}\tilde{L}(n^{1/\alpha})\right)<\infty
\ \text{ for all } \ \varepsilon >0,
\end{equation}
where $\tilde{L}(\cdot)$ is 
the de Bruijn conjugate of a slowly varying function $L(\cdot)$. The result provides
the rate of convergence in the Marcinkiewicz--Zygmund strong law of large numbers (SLLN)  with regularly varying normalizing constants. 
When the random variables are i.i.d. with $\E(X)=0,\ \E(|X|^p)<\infty$, and $L(\cdot)=1$, \eqref{BK01} was
obtained by Baum and Katz \cite{baum1965convergence}.

The notion of regularly varying function can be found in Seneta \cite[Chapter 1]{seneta1976regularly}.
A real-valued function $R(\cdot )$ is said to be regularly varying with index of regular variation
$\rho\in\mathbb{R}$ if it is 
a positive and measurable function on $[A,\infty)$ for some $A> 0$, and for each $\lambda>0$,
\begin{equation*}\label{rv01}
\lim_{x\to\infty}\dfrac{R(\lambda x)}{R(x)}=\lambda^\rho.
\end{equation*}
A regularly varying function with the index of regular variation $\rho=0$ is called to be slowly varying.
It is well known that a function $R(\cdot )$ is regularly varying
with the index of regular variation $\rho$ if and only if it can be written in the form
\begin{equation*}\label{sv01}
R(x)=x^\rho L(x)
\end{equation*}
where $L(\cdot)$ is a slowly varying function (see, e.g., Seneta \cite[p. 2]{seneta1976regularly}).
Seneta \cite{seneta1973interpretation} (see also Lemma 1.3.2 in Bingham et al. \cite{bingham1989regular}) proved that if 
$L(\cdot)$ is a slowly varying function defined on $[A,\infty)$ for some $A> 0$,
then there exists $B\ge A$ such that $L(x)$ is bounded on every finite closed interval $[a,b]\subset [B,\infty)$.
Galambos and Seneta \cite[p. 111]{galambos1973regularly} showed that for any slowly varying function $L(x)$,
there exists
a differentiable slowly varying function $L_1(\cdot)$ defined on $[B,\infty)$ for some $B\ge A$
such that 
\begin{equation*}
\lim_{x\to\infty}\dfrac{L(x)}{L_{1}(x)}=1\ \text{ and }\ \lim_{x\to\infty}\dfrac{xL_{1}'(x)}{L_{1}(x)}=0.
\end{equation*}
Conversely, if  $L(\cdot)$ is a positive differentiable function satisfying 
\begin{equation}\label{sv02}
\lim_{x\to\infty}\dfrac{xL'(x)}{L(x)}=0,
\end{equation}
then $L(\cdot)$ is a slowly varying function.  
If $L(\cdot)$ be a differentiable slowly varying function 
satisfying \eqref{sv02}, then by direct calculations (see, e.g., Lemma 2.3 of Anh et al. \cite{anh2021marcinkiewicz}), 
we can show that for all $p>0$, there exists $B>0$ such that
\begin{equation}\label{sv03}
x^pL(x) \text{ is strictly increasing on } [B,\infty),\ x^{-p}L(x) \text{ is strictly decreasing on } [B,\infty). 
\end{equation}
Let $L(\cdot)$ be a slowly varying function. Then by Theorem 1.5.13 in Bingham et al. \cite{bingham1989regular},
there exists a slowly varying function $\tilde{L}(\cdot)$, unique up to asymptotic equivalence, satisfying
\begin{equation}\label{BGT1513}
\lim_{x\to\infty}L(x)\tilde{L}\left(xL(x)\right)=1\ \text{ and } \lim_{x\to\infty}\tilde{L}(x)L\left(x\tilde{L}(x)\right)=1.
\end{equation}
The function $\tilde{L}$ is called the de Bruijn conjugate of $L$, and $\left(L,\tilde{L}\right)$ is called a (slowly varying) conjugate pair (see, e.g., 
Bingham et al. \cite[p. 29]{bingham1989regular}).
By \cite[Proposition 1.5.14]{bingham1989regular}, if $\left(L,\tilde{L}\right)$ is a conjugate pair, then for $a,b,\alpha>0$, 
each of $\left(L(ax),\tilde{L}(bx)\right)$, $\left(aL(x),a^{-1}\tilde{L}(x)\right),$
$\left(\left(L(x^\alpha)\right)^{1/\alpha},(\tilde{L}(x^\alpha))^{1/\alpha}\right)$
is a conjugate pair.
Bojani\'{c} and Seneta \cite{bojanic1971slowly} (see also Theorem 2.3.3 and Corollary 2.3.4 in Bingham et al. \cite{bingham1989regular})
proved that if $L(\cdot)$ is a slowly varying function satisfying 
\begin{equation}
\lim_{x\to\infty}\left(\dfrac{L(\lambda_0 x)}{L(x)}-1\right)\log( L(x))=0,
\end{equation}
for some $\lambda_0>1$, then for all $\alpha\in\R$,
\begin{equation}\label{BGT2.3.4}
\lim_{x\to\infty}\dfrac{L(xL^\alpha(x))}{L(x)}=1,
\end{equation}
and therefore, we can choose (up to aymptotic equivalence) $\tilde{L}(x)=1/L(x)$.
Especially, if for some $\gamma\in\R$, $L(x)=\log^\gamma(x+2)$, $x\ge0$, then $\tilde{L}(x)=1/L(x)$.
For $\alpha,\beta>0$ and for $f(x)=x^{ \beta/\alpha}L^{1/\alpha}(x^\beta)$,
$g(x)=x^{\alpha/\beta}\tilde{L}^{\beta}(x^{\alpha})$, we have (see Theorem 1.5.12 and Proposition 1.5.15 in \cite{bingham1989regular})
\begin{equation}\label{sv05}
\lim_{x\to\infty}\dfrac{f(g(x))}{x}=\lim_{x\to\infty}\dfrac{g(f(x))}{x}=1.
\end{equation} 

Here and thereafter, for a slowly varying function $L(\cdot)$,
we denote the de Bruijn
conjugate of $L(\cdot)$
by $\tilde{L}(\cdot)$. 
We will assume, without loss of generality, that
$L(x)$ and $\tilde{L}(x)$ are both defined on $[0,\infty)$ and
differentiable on $[A,\infty)$ for some $A>0$.
The following theorem is the main result of this paper. 
The Marcinkiewicz--Zygmund SLLN with regularly varying normalizing constants was also
studied recently by Anh et al. \cite{anh2021marcinkiewicz}, where the proof is based on the Kolmogorov maximal inequality.

\begin{theorem}
\label{main}
	Let $1\le p<2$, and let
	$\{X,X_n, \, n \geq 1\}$ be a sequence of p.i.i.d. random variables, $L(\cdot)$ a slowly varying function defined on $[0,\infty)$.
	When $p=1$, we assume further that $L(x)\ge 1$ and is increasing on $[0,\infty)$.
	Then the 
	following statements are equivalent.
	\begin{itemize}
		\item[(i)] The random variable $X$ satisfies
		\begin{equation}\label{add007}
		\E(X)=0,\ \E\left(|X|^p L^p(|X|)\right)<\infty.
		\end{equation}
		\item[(ii)] For all $\alpha\ge 1/p$, we have
		\begin{equation}\label{sv003}
		\sum_{n= 1}^{\infty}
		n^{\alpha p-2}\mathbb{P}\left(\max_{1\le j\le n}\left|
		\sum_{i=1}^{j}X_i\right|>\varepsilon  n^{\alpha}{\tilde{L}}(n^{\alpha})\right)<\infty \text{ for  all } \varepsilon >0.
		\end{equation}
		\item[(iii)] The Marcinkiewicz--Zygmund-type SLLN
		\begin{equation}\label{add014}
		\begin{split}
		\lim_{n\to\infty}\dfrac{
			\max_{1\le k\le n}\left|\sum_{i=1}^{k}X_i\right|}{n^{1/p}{\tilde{L}}(n^{1/p})}=0\ \text{ almost surely (a.s.)}
		\end{split}
		\end{equation}
		holds.
		
	\end{itemize}
\end{theorem}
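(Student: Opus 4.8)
The plan is to prove the cyclic chain (i)$\Rightarrow$(ii)$\Rightarrow$(iii)$\Rightarrow$(i), putting almost all of the analytic weight on (i)$\Rightarrow$(ii). Throughout write $S_j=\sum_{i=1}^{j}X_i$ and $b_n=n^{\alpha}\tilde{L}(n^{\alpha})$ (so that in (iii) one takes $\alpha=1/p$). Two of the three implications are comparatively soft. For (ii)$\Rightarrow$(iii) I specialize to $\alpha=1/p$, so that \eqref{sv003} reads $\sum_n n^{-1}\mathbb{P}(\max_{k\le n}|S_k|>\varepsilon b_n)<\infty$; comparing this series with its contribution along the dyadic subsequence $n=2^{j}$ and using that $\max_{k\le n}|S_k|$ is nondecreasing in $n$ while $b_{2^{j+1}}/b_{2^{j}}$ stays bounded (as $b$ is regularly varying of index $1/p$), the Borel--Cantelli lemma yields $\max_{k\le 2^{j}}|S_k|/b_{2^{j}}\to 0$ a.s., and filling the gaps between consecutive dyadic indices by monotonicity gives \eqref{add014}. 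For (iii)$\Rightarrow$(i) I use $|X_n|\le|S_n|+|S_{n-1}|\le 2\max_{k\le n}|S_k|$, so \eqref{add014} forces $|X_n|/b_n\to 0$ a.s.; since the divergence half of the Borel--Cantelli lemma is valid for pairwise independent events, this gives $\sum_n\mathbb{P}(|X|>\varepsilon b_n)<\infty$ for every $\varepsilon>0$.

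The bridge between such tail series and the moment condition \eqref{add007} is a de Bruijn computation. With $g(x)=x^{\alpha}\tilde{L}(x^{\alpha})$, the asymptotic inverse is $f(x)=x^{1/\alpha}L^{1/\alpha}(x)$ by \eqref{sv05} (take $\beta=1$), so $\#\{n:b_n<x\}\asymp f(x)$; combined with the conjugacy relations \eqref{BGT1513} (which give $\tilde{L}(xL(x))\sim 1/L(x)$ and $L(x\tilde{L}(x))\sim 1/\tilde{L}(x)$) and Karamata summation, this yields the two equivalences $\sum_n n^{\alpha p-1}\mathbb{P}(|X|>\varepsilon b_n)<\infty\Leftrightarrow\E(|X|^{p}L^{p}(|X|))<\infty$ and, for $\alpha=1/p$, $\sum_n\mathbb{P}(|X|>\varepsilon b_n)<\infty\Leftrightarrow\E(|X|^{p}L^{p}(|X|))<\infty$. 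The first closes the tail part of (i)$\Rightarrow$(ii); the second, applied to the conclusion just obtained, delivers the moment condition in (iii)$\Rightarrow$(i). It remains to identify $\E(X)=0$: the moment condition forces $\E|X|<\infty$, so Etemadi's SLLN for pairwise i.i.d. sequences gives $S_n/n\to\E(X)$ a.s., while \eqref{add014} (together with $b_n/n\to 0$ for $p>1$, respectively $\tilde{L}(n)$ bounded for $p=1$) forces $S_n/n\to 0$; hence $\E(X)=0$.

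For (i)$\Rightarrow$(ii), fix $\alpha\ge 1/p$ and $\varepsilon>0$. I block the outer sum dyadically: on $\{2^{K}\le n<2^{K+1}\}$ I truncate at the common level $b_{2^{K+1}}$, writing $Y_i=X_i\mathbb{I}(|X_i|\le b_{2^{K+1}})$ and $T_j=\sum_{i=1}^{j}(Y_i-\E Y_i)$, and reduce \eqref{sv003} to the finiteness of $\sum_K 2^{K(\alpha p-1)}\big[\,\mathbb{P}(\max_{i\le 2^{K+1}}|X_i|>b_{2^{K+1}})+\mathbb{P}(\max_{j\le 2^{K+1}}|T_j|>\delta b_{2^{K}})\,\big]$ for a suitable $\delta>0$. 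The first (large-values) term is dominated by $\sum_n n^{\alpha p-1}\mathbb{P}(|X|>b_n)$ and is finite by the de Bruijn computation above. The centering is controlled by showing $n|\E(X\mathbb{I}(|X|\le b_n))|=o(b_n)$: since $\E(X)=0$ this equals $n|\E(X\mathbb{I}(|X|>b_n))|$, and using the monotonicity in \eqref{sv03} and the conjugacy \eqref{BGT1513} one gets a bounded prefactor times $\E(|X|^{p}L^{p}(|X|)\mathbb{I}(|X|>b_n))\to 0$ (for $p>1$ exploiting the decay of $|X|^{1-p}$, and for $p=1$ exactly the hypothesis that $L\ge 1$ is increasing). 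This leaves the centered maximal term.

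The main obstacle is the centered maximal term $\sum_K 2^{K(\alpha p-1)}\mathbb{P}(\max_{j\le 2^{K+1}}|T_j|>\delta b_{2^{K}})$, which must be bounded using second moments only. Pairwise independence supplies additivity of variances, $\mathrm{Var}(T_j)=\sum_{i\le j}\mathrm{Var}(Y_i)$, but neither an exponential inequality nor Kolmogorov's maximal inequality is available. The naive route through the Rademacher--Menshov maximal inequality would insert a factor $(\log 2^{K})^{2}\asymp K^{2}$, and carrying this through the Karamata/de Bruijn summation above would produce the strictly larger moment $\E(|X|^{p}(\log|X|)^{2}L^{p}(|X|))$, which is inadmissible. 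Rio's technique is precisely what removes this logarithmic loss: one decomposes $\max_{j\le 2^{K+1}}|T_j|$ over dyadic time-scales and resums the scales against the geometric weights $2^{K(\alpha p-1)}$ so that each scale is charged to a separate range of the outer index $K$, whence only variance additivity is invoked and no maximal inequality---and no logarithm---enters. The resulting second-moment bound Karamata-integrates, via \eqref{sv03} and \eqref{BGT1513}, to a constant multiple of $\E(|X|^{p}L^{p}(|X|))<\infty$, completing (i)$\Rightarrow$(ii). Making this log-free maximal estimate work under mere pairwise independence is the crux of the argument.
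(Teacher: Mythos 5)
Your soft implications are fine and match the paper: (ii)$\Rightarrow$(iii) by dyadic subsequences and Borel--Cantelli, (iii)$\Rightarrow$(i) by the divergence half of Borel--Cantelli for pairwise independent events plus the tail--moment equivalence (the paper's Lemma \ref{lemma_bound01}) and an SLLN to identify $\E(X)=0$; likewise your reduction of (i)$\Rightarrow$(ii) to a large-values term, a centering term, and a centered maximal term is the paper's reduction. But there is a genuine gap at precisely the step you call the crux, and it is not a presentational gap: the construction you specify cannot be completed. You truncate at a \emph{single} level per dyadic block, $Y_i=X_i\mathbf{1}(|X_i|\le b_{2^{K+1}})$, and then claim that a dyadic time-scale decomposition of $T_j=\sum_{i\le j}(Y_i-\E Y_i)$, combined only with Chebyshev and pairwise variance additivity, avoids the logarithmic loss. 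It does not. Under pairwise independence, a time block of length $2^m$ of the $Y_i-\E Y_i$ has variance $2^m\mathrm{Var}(Y_1)$, so at \emph{every} scale $m\le K$ the total variance over all $2^{K-m}$ blocks is the same, namely about $2^K\mathrm{Var}(Y_1)$. If you split the threshold $\delta b_{2^K}$ into per-scale thresholds $\lambda_m$ with $\sum_m\lambda_m\le\delta b_{2^K}$ and apply Chebyshev scale by scale, then by AM--HM
\[
\sum_{m=0}^{K}\lambda_m^{-2}\cdot 2^K\mathrm{Var}(Y_1)\ \ge\ \frac{(K+1)^3}{\left(\delta b_{2^K}\right)^2}\,2^K\mathrm{Var}(Y_1),
\]
with equality direction showing equal weights are optimal; geometric weights $\lambda_m\propto 2^{\pm\theta m}$ are even worse, producing a polynomial factor $2^{2\theta K}$. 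Cauchy--Schwarz instead of the union bound gives the Rademacher--Menshov factor $(K+1)^2$ you correctly ruled out as inadmissible. There is simply no assignment of weights, and no ``charging of scales to ranges of $K$,'' that removes the loss when every scale carries the same variance.

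What actually makes Rio's method (and the paper's proof) work is that the truncation level varies with the time scale, which is absent from your outline. The paper sets $S_{j,m}=\sum_{i\le j}\left(X_i\mathbf{1}(|X_i|\le b_{2^m})-\E\left(X_i\mathbf{1}(|X_i|\le b_{2^m})\right)\right)$ and, with $j_m=\lfloor j/2^m\rfloor 2^m$, decomposes
\[
S_{j,n}=\sum_{m=1}^{n}\left(S_{j_{m-1},m-1}-S_{j_{m},m-1}\right)+\sum_{m=1}^{n}\left(S_{j,m}-S_{j,m-1}-S_{j_m,m}+S_{j_m,m-1}\right),
\]
so scale-$m$ blocks involve variables truncated at $b_{2^{m-1}}$, or differences of truncations at levels $b_{2^{m-1}}$ and $b_{2^m}$; their per-block variance is at most $2^{m+1}\E\left(X^2\mathbf{1}(|X|\le b_{2^m})\right)$, genuinely small for small $m$. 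This scale-dependent variance is exactly what lets the geometric thresholds $\lambda_{m,n}=\varepsilon_1 2^{bm}2^{an}\tilde{L}(2^{n\alpha})$ (with $\alpha p/2<a<\alpha$, $a+b=\alpha$) absorb the scale sum with no loss: after interchanging the sum over $n$ with the sum over truncation shells, one lands on $\sum_k 2^{k\alpha p}\,\mathbb{P}\left(b_{2^{k-1}}<|X|\le b_{2^k}\right)<\infty$, i.e.\ exactly the moment \eqref{add007} via Lemma \ref{lemma_bound01}. The price of changing truncation level across scales is a family of correction terms --- the differences $|X_{i,2^m}-X_{i,2^{m-1}}|$ and their expectations --- which the paper controls with Toeplitz's lemma and the Karamata summation Lemma \ref{sv51}; these terms do not appear in your proposal at all. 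So your write-up defers, rather than proves, the one estimate the theorem is about, and the specific single-truncation setup you propose for it is a dead end.
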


For a sequence of random variables which are pairwise independent but not identically distributed,  Cs\"{o}rg\H{o}
et al. \cite{csorgo1983strong} proved that the Kolmogorov condition alone does not ensure the SLLN.
On the case where the random variables $\{X,X_n,n\ge 1\}$ are p.i.i.d, Etemadi \cite{etemadi1981elementary} proved that the Kolmogorov SLLN holds
under moment condition $\E(|X|)<\infty$. For $\gamma>0$, Martikainen \cite{martikainen1995remark} proved that
\[\lim_{n\to\infty}\dfrac{
	\sum_{i=1}^{n}X_i}{n \log^{-\gamma}(n)}=0\ \text{ a.s.}\]
if and only if 	$\E(X)=0$ and $\E(|X|\log^{\gamma}(|X|+2)) <\infty$.
This is a special case of Theorem \ref{main} when $p=1$ and $L(x)\equiv \log^{\gamma}(x+2)$. For the case where $1<p<2$, Martikainen \cite{martikainen1995strong}
proved that if $\E(X)=0$ and $\E(|X|^p\log^r(|X|+1))<\infty$ for some $r>\max\{0,4p-6\}$, then the Marcinkiewicz--Zygmund SLLN holds.
By letting $L(x)\equiv 1$, we obtain the following corollary. When $\alpha=1/p$, this corollary was obtained
by Rio \cite{rio1995vitesses}.

\begin{corollary}\label{cor-01}
	Let $1\le p<2$, and let
	$\{X,X_n, \, n \geq 1\}$ be a sequence of p.i.i.d. random variables.
	Then the 
	following statements are equivalent.
	\begin{itemize}
		\item[(i)] The random variable $X$ satisfies
		\begin{equation*}
		\E(X)=0,\ \E\left(|X|^p)\right)<\infty.
		\end{equation*}
		\item[(ii)] For all $\alpha\ge 1/p$, we have
		\begin{equation*}
		\sum_{n= 1}^{\infty}
		n^{\alpha p-2}\mathbb{P}\left(\max_{1\le j\le n}\left|
		\sum_{i=1}^{j}X_i\right|>\varepsilon  n^{\alpha}\right)<\infty \text{ for  all } \varepsilon >0.
		\end{equation*}
		\item[(iii)] The Marcinkiewicz--Zygmund SLLN
		\begin{equation*}
		\begin{split}
		\lim_{n\to\infty}\dfrac{
			\max_{1\le k\le n}\left|\sum_{i=1}^{k}X_i\right|}{n^{1/p}}=0\ \text{ a.s.}
		\end{split}
		\end{equation*}
		holds.
	\end{itemize}
\end{corollary}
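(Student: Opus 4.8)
The plan is to obtain Corollary~\ref{cor-01} as the special case $L(\cdot)\equiv 1$ of Theorem~\ref{main}; the only genuine work is to identify the de Bruijn conjugate of the constant function and to verify that the hypotheses of the theorem are met. First I would determine $\tilde{L}$ when $L(x)\equiv 1$. Substituting $L\equiv 1$ into the defining relations \eqref{BGT1513}, both requirements reduce to $\lim_{x\to\infty}\tilde{L}(x)=1$, which is satisfied by the constant function $\tilde{L}(x)\equiv 1$; since the de Bruijn conjugate is unique up to asymptotic equivalence, we may take $\tilde{L}\equiv 1$. (This is also the case $\gamma=0$ of the remark that $L(x)=\log^{\gamma}(x+2)$ has $\tilde{L}(x)=1/L(x)$.)

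Next I would check that $L\equiv 1$ satisfies the hypotheses of Theorem~\ref{main}. The constant function is slowly varying and defined on $[0,\infty)$, and for $1<p<2$ no further condition is imposed. In the boundary case $p=1$ the theorem additionally asks that $L(x)\ge 1$ and that $L$ be increasing on $[0,\infty)$; the constant function satisfies $L(x)\equiv 1\ge 1$ and is (non-strictly) increasing, so the hypotheses hold here as well.

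Finally I would substitute $L\equiv 1$ and $\tilde{L}\equiv 1$ into the three statements of Theorem~\ref{main}. Condition~\eqref{add007} reduces to $\E(X)=0$ and $\E(|X|^p)<\infty$; the normalizing factor $n^{\alpha}\tilde{L}(n^{\alpha})$ in \eqref{sv003} collapses to $n^{\alpha}$; and $n^{1/p}\tilde{L}(n^{1/p})$ in \eqref{add014} collapses to $n^{1/p}$. These are precisely the statements (i)--(iii) of Corollary~\ref{cor-01}, whose equivalence therefore follows at once from Theorem~\ref{main}. The only point that needs a word of care is the case $p=1$, where ``increasing'' must be read in the non-strict sense in order to admit the constant slowly varying function; beyond this, the corollary carries no content not already present in the theorem, so there is no substantive obstacle.
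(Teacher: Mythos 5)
Your proposal is correct and follows exactly the paper's route: the paper obtains Corollary~\ref{cor-01} simply by setting $L(x)\equiv 1$ in Theorem~\ref{main}, which is what you do. The details you supply --- verifying from \eqref{BGT1513} that $\tilde{L}\equiv 1$ is the de Bruijn conjugate of the constant function, and reading ``increasing'' non-strictly in the $p=1$ hypothesis --- are precisely the points the paper leaves implicit, and they are handled correctly.
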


\section{Proof}
To prove the main result, we firstly introduce some preliminaries.
Through this paper, $C(\cdot)$, $C_1(\cdot), C_2(\cdot),...$ denote constants which depend only on variables appearing in the parentheses.
The first lemma is a direct consequence of Karamata's theorem (see \cite[Proposition 1.5.8]{bingham1989regular}).

\begin{lemma}\label{sv51}
	Let $a,b>1$, and $L(\cdot)$ be a differentiable slowly varying function defined on $[0,\infty)$.
	Then 
	\begin{equation*}\label{rem08}
	\begin{split}
	\sum_{k=1}^n a^kL(b^k) \le C_1(a,b)a^nL(b^n).
	\end{split}
	\end{equation*}
\end{lemma}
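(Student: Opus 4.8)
The plan is to show that the sum $\sum_{k=1}^n a^k L(b^k)$ is dominated by its last term $a^n L(b^n)$ up to a multiplicative constant. The intuition is that the factor $a^k$ grows geometrically while $L(b^k)$ varies slowly (subexponentially), so the terms of the sum form an essentially geometric progression with ratio bounded away from $1$; such a sum is controlled by its largest term.

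First I would reduce matters to comparing consecutive terms. The key quantity is the ratio
\begin{equation*}
\frac{a^{k}L(b^{k})}{a^{k+1}L(b^{k+1})}=\frac{1}{a}\cdot\frac{L(b^{k})}{L(b^{k+1})}.
\end{equation*}
Because $L$ is slowly varying, the representation theorem (or directly \eqref{sv02}) gives that $L(b^{k+1})/L(b^k)=L(b\cdot b^k)/L(b^k)\to 1$ as $k\to\infty$, since $\lim_{x\to\infty}L(bx)/L(x)=b^{0}=1$. Hence for all sufficiently large $k$, say $k\ge N$, this ratio lies in $\bigl(\tfrac{1}{2},\tfrac{3}{2}\bigr)$, so that
\begin{equation*}
\frac{a^{k}L(b^{k})}{a^{k+1}L(b^{k+1})}\le \frac{1}{a}\cdot\frac{3}{2}=:\theta.
\end{equation*}
Since $a>1$, we can shrink the window around $1$ if necessary to guarantee $\theta<1$; this is where I use $a>1$ crucially. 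Thus the tail terms decay geometrically backward from $k=n$.

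With $\theta<1$ in hand, I would split the sum as $\sum_{k=1}^{N-1}+\sum_{k=N}^{n}$. For the tail sum, iterating the bound gives $a^{k}L(b^{k})\le \theta^{\,n-k}a^{n}L(b^{n})$, so $\sum_{k=N}^{n}a^{k}L(b^{k})\le a^{n}L(b^{n})\sum_{j\ge 0}\theta^{j}=a^{n}L(b^{n})/(1-\theta)$. The finite head sum $\sum_{k=1}^{N-1}a^{k}L(b^{k})$ is a fixed constant depending only on $a,b$ (using that $L$ is bounded on compact intervals by the Galambos--Seneta/Senета boundedness result cited in the introduction), and since $a^{n}L(b^{n})\to\infty$, this head is eventually dominated by $a^{n}L(b^{n})$ as well. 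Collecting both pieces yields a single constant $C_{1}(a,b)$ with $\sum_{k=1}^{n}a^{k}L(b^{k})\le C_{1}(a,b)\,a^{n}L(b^{n})$.

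The main obstacle is making the transition from the asymptotic statement $L(b^{k+1})/L(b^{k})\to 1$ into a uniform bound $\theta<1$ valid for all $k$ past a fixed threshold, and then handling the finitely many small-$k$ terms cleanly so that the final constant depends only on $a$ and $b$. The paper advertises this as a direct consequence of Karamata's theorem (\cite[Proposition 1.5.8]{bingham1989regular}), which furnishes $\int_{1}^{x}t^{s-1}L(t)\,dt\sim x^{s}L(x)/s$ for the integral analogue; an alternative and perhaps cleaner route is to compare the sum to the integral $\int_{b}^{b^{n}} t^{(\log a/\log b)-1}L(t)\,dt$ via the change of variable $t=b^{u}$ and invoke Karamata directly, which automatically produces the asymptotic $\sim a^{n}L(b^{n})$ and hence the desired one-sided bound.
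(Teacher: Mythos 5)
Your proof is correct, but it is not the paper's route: the paper disposes of this lemma in one line, calling it a direct consequence of Karamata's theorem \cite[Proposition 1.5.8]{bingham1989regular}, i.e.\ essentially the integral-comparison argument you sketch only as an afterthought ($\int_1^{x}t^{s-1}L(t)\,dt\sim x^{s}L(x)/s$ with $s=\log a/\log b>0$, after the substitution $t=b^{u}$). Your primary argument is instead elementary and self-contained: since $L(b^{k+1})/L(b^k)\to 1$ by slow variation, the backward ratios of consecutive terms are eventually bounded by some $\theta<1$, so the tail of the sum is dominated geometrically by its last term, and the finite head is absorbed into the constant because $a^nL(b^n)=(b^n)^{s}L(b^n)\to\infty$ and is bounded away from $0$ for $n$ past the threshold (with finitely many small $n$ handled by enlarging the constant). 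This buys you independence from Karamata's theorem and from any sum-versus-integral comparison (which, in the Karamata route, needs eventual monotonicity of $u\mapsto a^{u}L(b^{u})$, available here from differentiability and \eqref{sv02}-type control, or the uniform convergence theorem); what the Karamata route buys in exchange is the sharp asymptotic constant $\sum_{k=1}^{n}a^kL(b^k)\sim \frac{a}{a-1}\,a^nL(b^n)$ rather than just a one-sided bound. Two small points, neither a gap: your initial choice $\theta=\frac{3}{2}\cdot\frac{1}{a}$ fails to be $<1$ when $1<a<\frac{3}{2}$, but you correctly flag this and the fix (take the window $(1-\delta,1+\delta)$ with $\delta<a-1$); and your constant, like the paper's, inevitably depends on $L$ as well as on $a,b$ (through the threshold $N$ and the head sum), so the notation $C_1(a,b)$ should be read with $L$ fixed, exactly as in the paper's application.
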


The following lemma gives simple criterions for $\E\left(|X|^p L^p(|X|)\right)<\infty$. When $\alpha=1/p$,
the equivalence of \eqref{81} and \eqref{82} was established by Anh et al. \cite[Proposition 2.6]{anh2021marcinkiewicz}.

\begin{lemma}\label{lemma_bound01}
	Let $p\ge 1,\ \alpha p\ge 1$, and $X$ be a random variable.
	Let $L(x)$ be a slowly varying function defined on $[0,\infty)$, and $b_n=n^{\alpha}\tilde{L}\left(n^{\alpha}\right)$, $n\ge 1$. 
	Assume that 
	$x^{1/\alpha}L^{1/\alpha}(x)$ and $x^{\alpha}\tilde{L}(x^{\alpha})$
	are 
strictly  
	increasing on $[A,\infty)$ for some $A>0$.
	Then the following statements are equivalent.
		\begin{equation}\label{81}
		\E\left(|X|^p L^p(|X|)\right)<\infty.
		\end{equation}

		\begin{equation}\label{82}
		\sum_{n=1}^\infty n^{\alpha p-1}\mathbb{P}(|X|>b_n)<\infty.
		\end{equation}	

		\begin{equation}\label{83}
		\begin{split}
		\sum_{n=1}^\infty 2^{n\alpha p}\mathbb{P}(b_{2^{n-1}}<|X|\le b_{2^n})<\infty.	
		\end{split}
		\end{equation}
\end{lemma}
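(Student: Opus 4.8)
My plan is to run all three conditions through a single change of variable built from the de Bruijn conjugate. Set
\[
f(x)=x^{1/\alpha}L^{1/\alpha}(x),\qquad g(x)=x^{\alpha}\tilde{L}(x^{\alpha}),
\]
so that $b_n=g(n)$, both $f$ and $g$ are (eventually) strictly increasing by the standing hypothesis of the lemma, and, taking $\beta=1$ in \eqref{sv05}, $f$ and $g$ are asymptotic inverses of one another: $f(g(x))/x\to1$ and $g(f(x))/x\to1$ as $x\to\infty$. The crucial algebraic observation is that the random variable $Y:=f(|X|)=|X|^{1/\alpha}L^{1/\alpha}(|X|)$ satisfies $Y^{\alpha p}=|X|^pL^p(|X|)$, so that \eqref{81} is exactly $\E(Y^{\alpha p})<\infty$. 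Thus the entire lemma reduces to re-expressing $\E(Y^{\alpha p})<\infty$ in terms of the distribution of $|X|$ sampled along the sequence $b_n$.

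The equivalence \eqref{82}$\Leftrightarrow$\eqref{83} is the routine, purely combinatorial part and uses neither $f$ nor $g$. Since $b_n$ is increasing, $n\mapsto\mathbb{P}(|X|>b_n)$ is nonincreasing; writing $\mathbb{P}(|X|>b_n)=\sum_{m\ge n}\mathbb{P}(b_m<|X|\le b_{m+1})$ and interchanging the order of summation gives
\[
\sum_{n\ge1}n^{\alpha p-1}\mathbb{P}(|X|>b_n)=\sum_{m\ge1}\mathbb{P}(b_m<|X|\le b_{m+1})\sum_{n=1}^{m}n^{\alpha p-1},
\]
and because $\alpha p\ge1$ one has $\sum_{n=1}^{m}n^{\alpha p-1}\asymp m^{\alpha p}$. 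A dyadic grouping together with the geometric estimate $\sum_{j\le n}2^{j\alpha p}\asymp 2^{n\alpha p}$ (valid since $\alpha p>0$) then identifies $\sum_m m^{\alpha p}\mathbb{P}(b_m<|X|\le b_{m+1})$ with the series in \eqref{83} up to two–sided constants, and here Lemma \ref{sv51} is convenient. Hence \eqref{82} and \eqref{83} converge or diverge together.

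It remains to connect the moment condition to the dyadic series, i.e. \eqref{81}$\Leftrightarrow$\eqref{83}. I would invoke the classical dyadic moment criterion
\[
\E(Y^{\alpha p})<\infty\iff \sum_{n\ge1}2^{n\alpha p}\,\mathbb{P}\!\left(2^{n-1}<Y\le 2^n\right)<\infty,
\]
which follows by sandwiching $Y^{\alpha p}$ between $2^{(n-1)\alpha p}$ and $2^{n\alpha p}$ on the block $\{2^{n-1}<Y\le2^n\}$. Because $f$ is eventually strictly increasing it has a genuine inverse $f^{-1}$, and by the asymptotic–inverse relation above $f^{-1}(x)\sim g(x)$, with $g(n)=b_n$; hence each block $\{2^{n-1}<Y\le2^n\}=\{f^{-1}(2^{n-1})<|X|\le f^{-1}(2^n)\}$ is, for large $n$, the block $\{b_{2^{n-1}}<|X|\le b_{2^n}\}$ with its endpoints perturbed by an asymptotically negligible factor. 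This threshold–matching is the one delicate point, since $f^{-1}(2^n)\sim b_{2^n}$ is an asymptotic, not an exact, equality. I would absorb it through the robustness of the dyadic scale: as $b$ is regularly varying of index $\alpha$, a factor $1+o(1)$ in the argument moves the threshold by at most a bounded number of dyadic blocks (eventually $(1\pm\varepsilon)b_{2^n}$ lies between $b_{2^{n-1}}$ and $b_{2^{n+1}}$), while the adjacent weights $2^{n\alpha p}$ differ only by the fixed factor $2^{\alpha p}$. Consequently the dyadic series in $Y$ and the series \eqref{83} in $|X|$ are comparable up to two–sided constants, which gives \eqref{81}$\Leftrightarrow$\eqref{83} and closes the cycle. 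The main obstacle throughout is precisely this passage between the $f^{-1}$–thresholds and the prescribed constants $b_n$; everything else is monotonicity of tails and summation by parts.
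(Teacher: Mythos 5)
Your proof is correct and rests on essentially the same engine as the paper's: the identity $\bigl(f(|X|)\bigr)^{\alpha p}=|X|^pL^p(|X|)$ for $f(x)=x^{1/\alpha}L^{1/\alpha}(x)$, the asymptotic-inverse pair $(f,g)$ with $g(x)=x^{\alpha}\tilde{L}(x^{\alpha})$ furnished by \eqref{sv05}, and eventual strict monotonicity to trade thresholds in $Y=f(|X|)$ for the constants $b_n=g(n)$. The only divergence is bookkeeping — you establish \eqref{82}$\Leftrightarrow$\eqref{83} by tail summation and dyadic grouping and \eqref{81}$\Leftrightarrow$\eqref{83} via the dyadic moment criterion, whereas the paper proves \eqref{81}$\Leftrightarrow$\eqref{82} through the integer-tail criterion $\E(Y^{r})<\infty\Leftrightarrow\sum_n n^{r-1}\mathbb{P}(Y>n)<\infty$ and \eqref{81}$\Leftrightarrow$\eqref{83} by sandwiching $|X|^pL^p(|X|)$ between $\bigl(f(g(2^{n-1}))\bigr)^{\alpha p}$ and $\bigl(f(g(2^{n}))\bigr)^{\alpha p}$ on each block — and your explicit absorption of the $f^{-1}\sim g$ perturbation into adjacent dyadic blocks carefully justifies the threshold-matching step that the paper dispatches in a single sentence.
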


\begin{proof}
	Let $f(x)=x^{1/\alpha} L^{1/\alpha}(x)$, $g(x)=x^{\alpha}\tilde{L}(x^{\alpha})$.
	By using \eqref{sv05} with $\beta=1$, we have 
	\begin{equation}\label{84}
	f(g(x))\sim g(f(x)) \sim x\text{ as }x\to\infty.
	\end{equation}
	Firstly, we will prove \eqref{81} is equivalent to \eqref{82}. For a non negative random variable $Y$ and $r>0$, $\E Y^r<\infty$ if and only if $\sum_{n=1}^\infty n^{r-1}\mathbb{P}(Y>n)<\infty$.
	Applying this, we have that 
	$\E\left(f^{\alpha p}(|X|)\right)<\infty$ if and only if
	\begin{equation}\label{r01}
	\sum_{n=1}^\infty n^{\alpha p-1} \mathbb{P}\left(f(|X|)>n\right)<\infty.
	\end{equation}
	Combining \eqref{84} with the assumption that 
	$f(x)$ and $g(x)$
	are strictly increasing on $[A,\infty)$, we see that \eqref{r01} is equivalent to
	\begin{equation*}\label{r02}
	\sum_{n=1}^\infty n^{\alpha p-1} \mathbb{P}\left(|X|>n^{\alpha}\tilde{L}(n^{\alpha})\right)<\infty.
	\end{equation*}
	The proof of the equivalence of \eqref{81} and \eqref{82} is completed. Now, we will prove \eqref{81} is equivalent to \eqref{83}.
	For $n$ large enough, on event $(b_{2^{n-1}}<|X|\le b_{2^n})$, we have
	\[f^{\alpha p}(b_{2^{n-1}})< f^{\alpha p}(|X|) \le f^{\alpha p}(b_{2^{n}}),\]
	or equivalently,
	\begin{equation}\label{85}
	\left(f(g(2^{n-1}))\right)^{\alpha p}< |X|^pL^p(|X|) \le \left(f(g(2^{n}))\right)^{\alpha p}.
	\end{equation}
	Combining \eqref{84} and \eqref{85}, we see that \eqref{83} is equivalent to \eqref{81}.
\end{proof}

\begin{proof}[Proof of Theorem \ref{main}]
	By the arguments leading to \eqref{sv02} and \eqref{sv03}, without loss of generality, we can assume that
	there exists a positive integer $A$ large enough such that 
	$x^{1/\alpha}L(x^{1/\alpha})$,
	$x^{\alpha}\tilde{L}(x^{\alpha})$ and $x^{p-1}L^p(x)$ (for $p>1$) are strictly increasing on $[A,\infty)$.
	
	Firstly, we prove the implication ((i) $\Rightarrow$ (ii)).
	It is easy to see that \eqref{sv003} is equivalent to
	\begin{equation}\label{sv006}
	\sum_{n= 1}^{\infty}
	2^{n(\alpha p-1)}\mathbb{P}\left(\max_{1\le j<2^n}\left|
	\sum_{i=1}^{j}X_i\right|>\varepsilon  2^{n\alpha}{\tilde{L}}\left(2^{n\alpha}\right)\right)<\infty \text{ for  all } \varepsilon >0.
	\end{equation}
	Assume that \eqref{add007} holds. For $n\ge 1$, set $b_n=n^{\alpha}{\tilde{L}}(n^{\alpha})$ and
	\[X_{i,n}=X_i\mathbf{1}(|X_i|\le b_n),\ 1\le i\le n.\]
	For all $\varepsilon>0$ and $n\ge 1$, we have
	\begin{equation}\label{sv005}
	\begin{split}
	&\mathbb{P}\left(\max_{1\le j<2^{n}}\left|\sum_{i=1}^j
	X_{i}\right|>\varepsilon  b_{2^{n}}\right)\le \mathbb{P}\left(\max_{1\le i<2^{n}}|X_i|>b_{2^{n}}\right)+\mathbb{P}\left(\max_{1\le j<2^{n}}\left|\sum_{i=1}^j  
	X_{i,2^{n}}\right|>\varepsilon  b_{2^{n}}\right)\\
	&\le \mathbb{P}\left(\max_{1\le i<2^{n}}|X_i|>b_{2^{n}}\right)+\mathbb{P}\left(\max_{1\le j<2^{n}}\left|\sum_{i=1}^j  
	(X_{i,2^{n}}-\E X_{i,2^{n}})\right|>\varepsilon
	b_{2^{n}}- 
	\sum_{i=1}^{2^{n}} \left|E( X_{i,2^{n}})\right|\right).
	\end{split}
	\end{equation}
	Since $\tilde{L}(\cdot)$ is slowly varying, $b_{2^{n+1}}\le 4^\alpha b_{2^{n}}$ for $n\ge n_0$ for some $n_0\ge A$.
	By the second half of \eqref{add007} and Lemma \ref{lemma_bound01}, we have
	\begin{equation}\label{sv007}
	\begin{split}
	\infty&>\sum_{j= 1}^{\infty}  j^{\alpha p-1}\mathbb{P}(4^\alpha|X|>b_j)=\sum_{j= 1}^{\infty} j^{\alpha p-2}\sum_{i=1}^{j}\mathbb{P}
	\left(4^\alpha|X_i|>b_j\right)\\
	&\ge \sum_{j= 1}^{\infty} j^{\alpha p-2}\mathbb{P}\left(\max_{1\le i\le j}
	4^\alpha|X_i|>b_j\right)= \sum_{n= 0}^{\infty} \sum_{j=2^{n}}^{2^{n+1}-1} j^{\alpha p-2}\mathbb{P}\left(\max_{1\le i\le j}
	4^\alpha|X_i|>b_j\right)\\
	&\ge \dfrac{1}{2}\sum_{n= 0}^{\infty} \sum_{j=2^{n}}^{2^{n+1}-1} 2^{n(\alpha p-2)}\mathbb{P}\left(\max_{1\le i< {2^{n}}}
	4^\alpha|X_i|>b_{j}\right)\\
	&\ge \dfrac{1}{2} \sum_{n=n_0}^{\infty} 2^{n(\alpha p-1)}\mathbb{P}\left(\max_{1\le i< {2^{n}}}
	|X_i|>b_{2^{n}}\right).
	\end{split}
	\end{equation}
	For $n\ge 1$, the first half of \eqref{add007} imply that
	\begin{equation}\label{sv37}
	\begin{split}
	\dfrac{|\sum_{i=1}^{n}\E(X_{i,n})|}{b_n}& \le \dfrac{\sum_{i=1}^{n}\left|\E X_{i}\mathbf{1}(|X_i|> b_n)\right|}{b_n}\\
	&\le \dfrac{n\E(|X|\mathbf{1}(|X|> b_n))}{b_n}.
	\end{split}
	\end{equation}
	For $n$ large enough
	and for $\omega \in (|X|>b_n)$, we have
	\begin{equation}\label{sv39}
	\begin{split}
	\dfrac{n}{b_n}&\le \dfrac{n^{(p-1)\alpha}\tilde{L}^{p-1}(n^{\alpha})}{\tilde{L}^{p}(n^{\alpha})}= \dfrac{\left(n^{\alpha}\tilde{L}(n^{\alpha})\right)^{p-1}L^p \left(n^{\alpha}\tilde{L}(n^{\alpha})\right)}{\tilde{L}^{p}(n^{\alpha})L^p 
		\left(n^{\alpha}\tilde{L}(n^{\alpha})\right)}\\
	&\le 2 b_{n}^{p-1}L^p(b_n) \le  2|X(\omega)|^{p-1} L^p(|X(\omega)|),
	\end{split}
	\end{equation}
	where we have applied the second half of \eqref{BGT1513} in the second inequality and the monotonicity of $x^{p-1}L^p(x)$ in the third inequality.
	It follows from \eqref{sv39} and the second half of \eqref{add007} that
	\begin{equation}\label{sv41}
	\begin{split}
	\dfrac{n\left|\E(|X|\mathbf{1}(|X|> b_n))\right|}{b_n}&
	\le 2\E\left(|X|^p L^p(|X|)\mathbf{1}\left(|X|> b_n\right)\right)\to 0 \text{ as } n\to \infty.
	\end{split}
	\end{equation}
	From  
	\eqref{sv005}, \eqref{sv007}, \eqref{sv37} and \eqref{sv41}, the proof of \eqref{sv006} will be completed if we can show that
	\begin{equation}\label{21}
	\sum_{n=1}^{\infty} 2^{n(\alpha p-1)} \mathbb{P}\left(\max_{1\le j< 2^n}\left|\sum_{i=1}^j  
	(X_{i,2^n}-\E X_{i,2^n})\right|\ge \varepsilon b_{2^{n-1}} \right)<\infty \text{ for all }\varepsilon>0.
	\end{equation}
	For $m\ge 0,$ set $S_{0,m}=0$ and
	\[S_{j,m}=\sum_{i=1}^j (X_{i,2^m}-\E X_{i,2^m}),\ j\ge 1.\]
	Now, we use techniques developed by Rio \cite{rio1995vitesses}.
	For $1\le j<2^n$ and for $0\le m\le n$, let $k=\lfloor j/2^m\rfloor $ be the greatest integer which is
	less than or equal to $j/2^m$. Then $0\le k<2^{n-m}$ and $k 2^m\le j< (k+1)2^m$. Let $j_m = k 2 ^m$, then
	\begin{equation}\label{23}
	S_{j,n}=\sum_{m=1}^{n}(S_{j_{m-1},m-1}-S_{j_{m},{m-1}})+\sum_{m=1}^{n}(S_{j,m}-S_{j,m-1}-S_{j_m,m}+S_{j_m,m-1}),\  1\le j<2^n.
	\end{equation}
and
	\begin{equation}\label{24}
	\begin{split}
	\left|S_{j,m}-S_{j,m-1}-S_{j_m,m}+S_{j_m,m-1}\right|& \le \sum_{i=j_m+1}^{j_m+2^m}
	\left(\left|X_ {i, 2^m}-X_ {i, 2^{m-1}}\right|+\E\left|X_ {i, 2^m}-X_ {i, 2^{m-1}}\right|\right).
	\end{split}
	\end{equation}
Set
	\[Y_ {i, m} =\left|X_ {i, 2^m}-X_{i, 2^{m-1}}\right|-\E\left(\left|X_{i, 2^m}-X_{i, 2^{m-1}}\right|\right),\ m\ge1,i\ge1.\]
It follows from \eqref{24} that
	\begin{equation}\label{27}
	\begin{split}
	\left|S_{j,m}-S_{j,m-1}-S_{j_m,m}+S_{j_m,m-1}\right|& \le \sum_{i=j_m+1}^{j_m+2^m}Y_{i, m}+2^{m+1}
	\E\left(\left|X_ {1, 2^m}-X_ {1, 2^{m-1}}\right|\right).
	\end{split}
	\end{equation}
	By the definition of $j_m$, we have either $j_{m-1}=j_m$ or $j_{m-1}=j_m+2^{m-1}$. Therefore
	\begin{equation}\label{27a}
	\left|S_{j_{m-1},m-1}-S_{j_{m},{m-1}}\right|\le \left|\sum_{i=j_m+1}^{j_m+2^{m-1}}\left(X_ {i, 2^{m-1}}-\E(X_{i, 2^{m-1}})\right)\right|.
	\end{equation}
	Combining \eqref{23}, \eqref{27} and \eqref{27a}, we have
	\begin{equation}\label{28}
	\begin{split}
	\max_{1\le j< 2^n}\left|S_{j,n}\right|
	&\le \sum_{m=1}^n \max_{0\le k<2^{n-m}}\left|\sum_{i=k2^m+1}^{k2^m+2^{m-1}}\left(X_{i, 2^{m-1}}-\E(X_{i, 2^{m-1}})\right)\right| \\
	&\quad +\sum_{m=1}^n \max_{0\le k<2^{n-m}}\left|\sum_{i=k2^m+1}^{(k+1)2^m}Y_{i,m}\right|+\sum_{m=1}^n 2^{m+1}
	\E\left(\left|X_ {1, 2^m}-X_ {1, 2^{m-1}}\right|\right).
	\end{split}
	\end{equation}
	It follows from \eqref{sv41} that
	\begin{equation}\label{33}
	\begin{split}
	\dfrac{2^{m}\E\left(|X|\textbf{1}\left(|X|> b_{2^{m-1}}\right)\right)}{b_{2^{m}}}\to 0\ \text{ as } \ m\to\infty.
	\end{split}
	\end{equation}
From \eqref{33} and  Lemma \ref{sv51}, we can apply Toeplitz's lemma and conclude that
 	\begin{equation}\label{34}
	\begin{split}
\lim_{n\to \infty}\dfrac{\sum_{m=1}^n 2^{m}\E\left(|X|\textbf{1}\left(|X|> b_{2^{m-1}}\right)\right)}{b_{2^n}}=0.
	\end{split}
	\end{equation}
By using
\[\E\left(\left|X_ {1, 2^m}-X_ {1, 2^{m-1}}\right|\right)\le  \E\left(|X|\textbf{1}\left(|X|>b_{2^{m-1}}\right)\right),\ m\ge A, \]
we have from \eqref{34} that
\begin{equation}\label{35}
\begin{split}
\lim_{n\to \infty}\dfrac{\sum_{m=1}^n 2^{m+1}\E\left(\left|X_ {1, 2^m}-X_ {1, 2^{m-1}}\right|\right)}{b_{2^n}}=0.
\end{split}
\end{equation}
	Let $\varepsilon_1>0$ be arbitrary, and let
	$a$ and $b$ be positive constants satisfying
	\[\alpha p/2<a<\alpha,\ a+b=\alpha.\]
	For $n\ge 1$, $0\le m\le n$,  let $\lambda_{m,n}=\varepsilon_{1}2^{bm}  2^{an}\tilde{L}(2^{n\alpha})$.
	Then
	\begin{equation}\label{30}
	\begin{split}
	\sum_{m=1}^n\lambda_{m,n}&=\varepsilon_1 2^{an}\tilde{L}(2^{n\alpha})\sum_{m=1}^n 2^{mb}\\
	&=\varepsilon_1 2^{an}2^{b}\tilde{L}(2^{n\alpha})\dfrac{2^{bn}-1}{2^{b}-1}\le \dfrac{ 2^b \varepsilon_1 b_{2^{n}}}{2^b-1}:=C_1(b)\varepsilon_1 b_{2^{n}}.
	\end{split}
	\end{equation}
	By \eqref{30} and Chebyshev's inequality, we have
	\begin{equation}\label{26}
	\begin{split}
	&\mathbb{P}\left(\sum_{m=1}^n \max_{0\le k<2^{n-m}}\left|\sum_{i=k2^m+1}^{(k+1)2^m}Y_{i,m}\right|\ge C_1(b)\varepsilon_1 b_{2^{n}}\right)\\
	&\le \sum_{m=1}^n\mathbb{P}\left(\max_{0\le k<2^{n-m}}\left|\sum_{i=k2^m+1}^{(k+1)2^m}Y_{i,m}\right|\ge \lambda_{m,n}\right)\\
	&\le \sum_{m=1}^n  \lambda_{m,n}^{-2} \E\left(\max_{0\le k<2^{n-m}}\left|\sum_{i=k2^m+1}^{(k+1)2^m}Y_{i,m}\right|\right)^2\\
	&\le \sum_{m=1}^n \lambda_{m,n}^{-2} \sum_{k=0}^{2^{n-m}-1}\E\left(\sum_{i=k2^m+1}^{(k+1)2^m}Y_{i,m}\right)^2\\
	&\le  \sum_{m=1}^n 2^n \lambda_{m,n}^{-2}\E\left(|X_{i,2^m}-X_{i,2^{m-1}}|^2\right)\\
	&\le  \sum_{m=1}^n 2^{n+1} \lambda_{m,n}^{-2}\left(\E (X_{i,2^{m}}^2)+\E(X_{i,2^{m-1}}^2)\right)\\
	&=  \sum_{m=1}^n 2^{n+1} \lambda_{m,n}^{-2}\left(\E\left(X^2\mathbf{1} (|X|\le b_{2^{m}})\right)+\E\left(X^2\mathbf{1} (|X|\le b_{2^{m-1}})\right)\right),
	\end{split}
	\end{equation}
	and
	\begin{equation}\label{26a}
	\begin{split}
	&\mathbb{P}\left(\sum_{m=1}^n \max_{0\le k<2^{n-m}}\left|\sum_{i=k2^m+1}^{k2^m+2^{m-1}}\left(X_ {i, 2^{m-1}}-\E(X_ {i, 2^{m-1}})\right)\right| \ge C_1(b)\varepsilon_1 b_{2^{n}}\right)\\
	&\le \sum_{m=1}^n\mathbb{P}\left(\max_{0\le k<2^{n-m}}\left|\sum_{i=k2^m+1}^{k2^m+2^{m-1}}\left(X_ {i, 2^{m-1}}-\E(X_ {i, 2^{m-1}})\right)\right| \ge \lambda_{m,n}\right)\\
	&\le \sum_{m=1}^n  \lambda_{m,n}^{-2} \E\left(\max_{0\le k<2^{n-m}}\left|\sum_{i=k2^m+1}^{k2^m+2^{m-1}}\left(X_ {i, 2^{m-1}}-\E(X_ {i, 2^{m-1}})\right)\right| \right)^2\\
	&\le \sum_{m=1}^n \lambda_{m,n}^{-2} \sum_{k=0}^{2^{n-m}-1}\E\left(\sum_{i=k2^m+1}^{k2^m+2^{m-1}}\left(X_ {i, 2^{m-1}}-\E(X_ {i, 2^{m-1}})\right)\right)^2\\
	&\le  \sum_{m=1}^n 2^n \lambda_{m,n}^{-2}\E\left(X_ {1, 2^{m-1}}^2\right)=  \sum_{m=1}^n 2^n \lambda_{m,n}^{-2}\E\left(X^2\mathbf{1} (|X|\le b_{2^{m-1}})\right).
	\end{split}
	\end{equation}
Since $\alpha p<2a$ and $1\le p<2$, elementary calculations show that
\begin{equation}\label{26b}
\sum_{n=1}^{\infty}2^{n(\alpha p-1)}\sum_{m=1}^{n}2^n\lambda_{m, n}^{-2}\le \dfrac{1}{\varepsilon_{1}^{2}(4^b-1)}\sum_{n=1}^{\infty}2^{n(\alpha p-2a)}\tilde{L}^{-2}(2^{n\alpha})<\infty.
\end{equation}
We recall that $b_n$ is strictly increasing on $[A,\infty)$.
From \eqref{28}, \eqref{35}, and \eqref{26}--\eqref{26b}, the proof of \eqref{21} is completed if we can show that
	\begin{equation}\label{38}
	I:=\sum_{n=A}^\infty 2^{n(\alpha p-1)} \sum_{m=A}^n 2^n \lambda_{m,n}^{-2}\sum_ {k = A}^m b_{2^{k}}^2 \mathbb{P}\left(b_{2^{k-1}}<|X|\le b_{2^k}\right)<\infty.
	\end{equation}
By using the second half of \eqref{add007}, Lemmas \ref{sv51}--\ref{lemma_bound01}, and definition of $a$ and $b$, we have
	\begin{equation*}\label{26c}
	\begin{split}
	I&= \sum_{n=A}^{\infty}2^{n(\alpha p-1)} \varepsilon_{1}^{-2} 2^{n (1-2a)}\tilde{L}^{-2}(2^{n\alpha})\left(\sum_{m=A}^n 2^{- 2mb} \sum_ {k = A}^m b_{2^{k}}^2 \mathbb{P}\left(b_{2^{k-1}}<|X|\le b_{2^k}\right)\right)\\
	&\le C_2(b) \varepsilon_{1}^{-2} \sum_{n=A}^{\infty} 2^{n (\alpha p-2a)}\tilde{L}^{-2}(2^{n\alpha})\sum_ {k = A}^n 2^{- 2kb} b_{2^{k}}^2 \mathbb{P}\left(b_{2^{k-1}}<|X|\le b_{2^k}\right)\\
	&\le C_2(b) \varepsilon_{1}^{-2} \sum_{k=A}^{\infty} \left(\sum_ {n=k}^\infty 2^{n (\alpha p-2a)}\tilde{L}^{-2}(2^{n\alpha})\right) 2^{- 2kb} b_{2^{k}}^2 \mathbb{P}\left(b_{2^{k-1}}<|X|\le b_{2^k}\right)\\
	&\le C(\alpha,a,b,p) \varepsilon_{1}^{-2} \sum_{k=A}^{\infty} 2^{k(\alpha p-2a)}\tilde{L}^{-2}(2^{k \alpha}) 2^{- 2kb} b_{2^{k}}^2 \mathbb{P}\left(b_{2^{k-1}}<|X|\le b_{2^k}\right)\\
	&= C(\alpha,a,b,p) \varepsilon_{1}^{-2} \sum_{k=A}^{\infty}  2^{k\alpha p} \mathbb{P}\left(b_{2^{k-1}}<|X|\le b_{2^k}\right)<\infty
	\end{split}
	\end{equation*}
	thereby proving \eqref{38}. The proof of the implication ((i) $\Rightarrow$ (ii)) is completed.
	
	By choosing $\alpha=1/p$, we have the implication ((ii) $\Rightarrow$ (iii)). The proof of the implication ((iii) $\Rightarrow$ (i))
	follows from the Borel--Cantelli lemma for pairwise independent events
	and Lemma \ref{lemma_bound01} (see the proof of Theorem 3.1 in \cite{anh2021marcinkiewicz}).
	
\end{proof}

\bibliographystyle{amsplain}
\bibliography{mybib}
\end{document}